\theoremstyle{plain}
\newtheorem{theorem}{Теорема}
\newtheorem{lemma}{Лемма}
\newtheorem{corollary}{Следствие}
\theoremstyle{definition}
\numberwithin{theorem}{section}
\numberwithin{lemma}{section}
\numberwithin{proposition}{section}
\numberwithin{corollary}{section}
\numberwithin{remark}{section}
\numberwithin{definition}{section}
\numberwithin{example}{section}
\newcommand\br[1]{\left(#1\right)}
\newcommand\abr[1]{\left\langle#1\right\rangle}
\DeclareMathOperator*{\argmin}{arg\,min}
\author{Ф.С. Стонякин}
\title{Адаптивные зеркальные спуски для задач выпуклого программирования с использованием $\delta$-субградиентов}
\date{}
\begin{document}

\maketitle

\setcounter{section}{1}

Задачи негладкой оптимизации с ограничениями часто возникают в приложениях и им посвящены всё новые работы, в том числе в области субградиентых методов (см. например \cite{Dvur_Nurminsky} и имеющиеся там ссылки). Отметим, что во многих задачах операция нахождения точного вектора субградиента целевого функционала может быть достаточно затратной. Одним из подходов к указанной проблеме может быть использование вместо субградиентов $f$ {\it $\delta$-субградиентов} $\nabla_{\delta} f(x)$ в текущей точке $x$:
$$f(y)-f(x)\geqslant\langle\nabla_{\delta} f(x),y-x\rangle-\delta$$
для всякого $y\in Q$ при некотором фиксированном $\delta\geq0$ (ясно, что при $\delta=0$ $\nabla_{\delta} f(x)=\nabla f(x)$~--- обычный субградиент). Например, для
$$f(x)=\max_{y\in P}\varphi(x,y)$$
выпуклой функции $\varphi$ по $x$ и непрерывной по $y\in P$ ($P$~--- некоторый компакт) $\nabla_{\delta} f(x)$ есть субградиент по $x$ функционала $\varphi(x,y_{\delta})$, где
$$\max_{y\in P}\varphi(x,y)-\varphi(x,y_{\delta})\leqslant\delta.$$

Оказывается, что в рассматриваемых нами методах зеркального спуска (\cite{Stonyakin_Monograpf}, глава 5) можно заменить обычный субградиент $\nabla f(x)$ на $\delta$-субградиент $\nabla_{\delta} f(x)$. Оценки качества решения при этом изменятся на величину $O(\delta)$. В настоящем тексте, который дополняет \cite{Stonyakin_Monograpf}, мы покажем, как это возможно сделать. Через $||\cdot||$ будем обозначать норму в рассматриваемом конечномерном пространстве переменных задачи, а под $||\cdot||_*$  будем понимать норму в сопряжённом (двойственном) пространстве к исходному.

Начнём с важного вспомогательного утверждения, которое есть аналог для $\delta$-субградиентов известного утверждения из работ Ю.\,Е.\,Нестерова (см. \cite{Nesterov_2010}, лемма 3.2.1). Оно позволяет получать оценки скорости сходимости для широкого класса целевых функционалов. Пусть при $x$ и $y\in Q$
$$v^{\delta}_{f}(x,y)=\biggl\langle\frac{\nabla_{\delta} f(x)}{\|\nabla_{\delta} f(x)\|_{*}}, x-y\biggr\rangle$$
при $\nabla_{\delta} f(x)\neq 0$ и $v^{\delta}_{f}(x,y)=0$ при $\nabla_{\delta} f(x)= 0$.

\begin{lemma}\label{lem-neserov-delta}
Если $f: Q\rightarrow \mathbb{R}$ выпуклый функционал и при $\tau>0$
$$\omega(\tau)=\max_{x\in Q}\{f(x)-f(x_{*})\,|\,\|x-x_{*}\|\leqslant\tau\},\;\omega(\tau)= 0\text{ при }\tau \leqslant 0.$$
Тогда для всякого $x\in Q$
$$f(x)-f(x_{*})\leqslant\omega(v^{\delta}_{f}(x,x_{*}))+\delta.$$
\end{lemma}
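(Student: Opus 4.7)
The plan is to adapt Nesterov's classical argument (\cite{Nesterov_2010}, \S3.2.1) to the $\delta$-subgradient setting; the extra $\delta$ in the bound will enter through a single use of the inexact subgradient inequality. First I will dispose of two degenerate cases. If $\nabla_{\delta} f(x)=0$, then by definition $v^{\delta}_{f}(x,x_{*})=0$, so $\omega(v^{\delta}_{f})=0$, and the $\delta$-subgradient inequality applied at $x$ with $y=x_{*}$ immediately yields $f(x_*)\geqslant f(x)-\delta$. The same reasoning covers $v^{\delta}_{f}\leqslant 0$, where $\omega\equiv 0$ by convention.

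In the substantive case I will have $\nabla_{\delta} f(x)\neq 0$ and $\tau:=v^{\delta}_{f}(x,x_{*})>0$. Put $u=\nabla_{\delta} f(x)/\|\nabla_{\delta} f(x)\|_{*}$, so that $\|u\|_{*}=1$ and $\langle u,x-x_{*}\rangle=\tau$. By the definition of the dual norm I will choose a primal unit vector $w$ (with $\|w\|=1$) satisfying $\langle u,w\rangle=1$, and then introduce the auxiliary point $\tilde x:=x_{*}+\tau w$. Then $\|\tilde x-x_{*}\|=\tau$, so $\tilde x$ is admissible in the maximum defining $\omega(\tau)$. The central computation is
\[
\langle\nabla_{\delta} f(x),\tilde x-x\rangle=\|\nabla_{\delta} f(x)\|_{*}\bigl(\langle u,x_{*}-x\rangle+\tau\langle u,w\rangle\bigr)=\|\nabla_{\delta} f(x)\|_{*}(-\tau+\tau)=0,
\]
so applying the $\delta$-subgradient inequality at $x$ to $y=\tilde x$ produces $f(\tilde x)\geqslant f(x)-\delta$. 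The definition of $\omega$ then gives $\omega(\tau)\geqslant f(\tilde x)-f(x_{*})\geqslant f(x)-f(x_{*})-\delta$, which is equivalent to the desired bound.

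The key (and only subtle) step is the choice of $w$ as a primal unit vector realizing the dual norm against $u$; this is precisely the device that makes the first-order correction $\langle\nabla_{\delta} f(x),\tilde x-x\rangle$ vanish, and it is the same trick as in Nesterov's $\delta=0$ proof. The inexactness enters the argument only once, through the single application of the $\delta$-subgradient inequality to $\tilde x$, which is why the resulting bound is loosened by precisely $\delta$ and not by any larger quantity.
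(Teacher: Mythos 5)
Your proof is correct and is essentially the paper's argument: both hinge on producing a point $y$ with $\langle\nabla_{\delta}f(x),y-x\rangle=0$ and $\|y-x_{*}\|=v^{\delta}_{f}(x,x_{*})$, applying the $\delta$-subgradient inequality there (the single source of the additive $\delta$), and then invoking the definition of $\omega$. The only difference is that you construct this point explicitly as $x_{*}+\tau w$ with a norming vector $w$ (so $\|w\|=1$ and $\langle u,w\rangle=\|u\|_{*}=1$), whereas the paper takes the nearest point of the hyperplane to $x_{*}$ and identifies its distance with $v^{\delta}_{f}(x,x_{*})$ via an optimality-condition (Lagrange-multiplier) computation --- your route simply skips that step, and both versions equally leave implicit that the auxiliary point is admissible in the maximum defining $\omega$.
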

\begin{proof}

1)Покажем, что при $v^{\delta}_{f}(x,x_{*}) > 0$
$$v^{\delta}_{f}(x,x_{*})=\min_{y}\{\|y-x_{*}\|\,|\,\langle \nabla_{\delta} f(x),y-x\rangle=0\}.$$
Выберем $y_{*}$ так, чтобы $\|y_{*}-x_{*}\|=\displaystyle\min_{y}\{\|y-x_{*}\|\,|\,\langle \nabla_{\delta} f(x),y-x\rangle=0\}$.
Далее, для фиксированного $x$ верно $\langle \nabla_{\delta} f(x),y_{*}-x\rangle=0$. Поэтому существует такое $\lambda>0$, что $\nabla_{\delta} f(x)=\lambda s$ при $\langle s, y_{*}-x_{*}\rangle=\|y_{*}-x_{*}\|$ и $\|s\|_{*}=1$. Имеем:
$$0=\langle \nabla_{\delta} f(x),y_{*}-x\rangle=\lambda \langle s,y_{*}-x_{*}\rangle+\langle \nabla_{\delta} f(x),x_{*}-x\rangle,$$
т.е.
$$\lambda=\frac{\langle \nabla_{\delta} f(x),x-x_{*}\rangle}{\|y_{*}-x_{*}\|}=\|\nabla_{\delta} f(x)\|_{*}.$$
Это и означает, что $v^{\delta}_{f}(x,x_{*})=\|y_{*}-x_{*}\|$.

2) Поскольку $\langle \nabla_{\delta} f(x),y-x\rangle=0$, то $f(y)-f(x)\geqslant -\delta$ для всякого $y$, при котором $\langle\nabla_{\delta} f(x),y-x\rangle=0$. Далее,
$$f(x)-f(x_{*})\leqslant f(y)-f(x_{*})+\delta\leqslant \omega(v^{\delta}_{f}(x,x_{*}))+\delta,$$
что и требовалось. Случай $v^{\delta}_{f}(x,x_{*}) \leqslant 0$ очевиден.
\end{proof}

Приведём также следующий аналог базовой леммы для зеркальных спусков для $\delta$-субградиентов $\nabla_{\delta} f$ \cite{Dvur_Nurminsky}. Аналогично \cite{Dvur_Nurminsky} через $V(y, x) = d(y) - d(x) - \langle \nabla d(x), y - x \rangle $ мы обозначаем расхождение или дивергенцию Брэгмана, порождённую некоторой 1-сильно выпуклой относительно нормы рассматриваемого пространства $||\cdot||$ функцией расстояния $d$. 
\begin{lemma}
\label{p0_Lm:MDPropDelta}
Пусть $f$~--- некоторая выпуклая функция на $Q$, $h > 0$~--- размер шага. Пусть точка $y$ определяется формулой $ y = \mathrm{Mirr}[x](h \cdot (\nabla_{\delta} f(x)))$. Тогда для всякого $z \in Q$
$$h \cdot \big( f(x) - f(z)\big) \leqslant h \cdot \langle \nabla_{\delta}f(x), x - z \rangle + h \cdot \delta\leqslant$$
$$\leqslant \frac{h^2}{2} \| \nabla_{\delta} f (x) \|_*^2 + h \cdot \delta + V(z,x) - V(z,y).$$
\end{lemma}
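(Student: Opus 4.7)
The plan is to break the chain of inequalities into two independent pieces. The leftmost inequality $h(f(x)-f(z))\leqslant h\langle\nabla_{\delta}f(x),x-z\rangle+h\delta$ is nothing but a rearrangement of the defining property of a $\delta$-subgradient applied at the point $z$: namely, $f(z)-f(x)\geqslant\langle\nabla_{\delta}f(x),z-x\rangle-\delta$, multiplied through by $h>0$. So that piece needs no real work.

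The substantive part is bounding $h\langle\nabla_{\delta}f(x),x-z\rangle$ by $\frac{h^{2}}{2}\|\nabla_{\delta}f(x)\|_{*}^{2}+V(z,x)-V(z,y)$. First I would split
$$h\langle\nabla_{\delta}f(x),x-z\rangle=h\langle\nabla_{\delta}f(x),x-y\rangle+h\langle\nabla_{\delta}f(x),y-z\rangle$$
and attack the two summands separately. For the second summand I would use the optimality condition characterizing $y=\mathrm{Mirr}[x](h\nabla_{\delta}f(x))$, i.e. $\langle h\nabla_{\delta}f(x)+\nabla d(y)-\nabla d(x),z-y\rangle\geqslant 0$ for all $z\in Q$, together with the standard three-point identity
$$\langle\nabla d(y)-\nabla d(x),z-y\rangle=V(z,x)-V(z,y)-V(y,x),$$
which immediately produces $h\langle\nabla_{\delta}f(x),y-z\rangle\leqslant V(z,x)-V(z,y)-V(y,x)$.

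For the first summand I would estimate by Fenchel--Young (or equivalently the dual norm inequality plus $ab\leqslant\frac{a^{2}}{2}+\frac{b^{2}}{2}$):
$$h\langle\nabla_{\delta}f(x),x-y\rangle\leqslant h\|\nabla_{\delta}f(x)\|_{*}\|x-y\|\leqslant\frac{h^{2}}{2}\|\nabla_{\delta}f(x)\|_{*}^{2}+\frac{1}{2}\|x-y\|^{2}.$$
The $1$-strong convexity of $d$ gives $V(y,x)\geqslant\frac{1}{2}\|y-x\|^{2}$, so the term $\frac{1}{2}\|x-y\|^{2}$ from this bound is exactly absorbed by the $-V(y,x)$ obtained from the three-point identity. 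Adding the two pieces and finally adding $h\delta$ yields the claimed second inequality.

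The only real obstacle is a conceptual one rather than a computational one: remembering that the Mirror Descent optimality condition, combined with the three-point identity for $V$, produces the extra $-V(y,x)$ which must be carefully matched against the quadratic $\frac{1}{2}\|x-y\|^{2}$ coming from Fenchel--Young; the presence of the $\delta$-term propagates linearly and does not interact with this balancing, so the argument is essentially the classical mirror descent one-step estimate with an additive $h\delta$ error carried along.
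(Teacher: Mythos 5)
Your argument is correct and is exactly the standard one-step mirror descent estimate that the paper relies on: the paper itself gives no proof of this lemma, importing it from \cite{Dvur_Nurminsky}, and the cited source proceeds precisely as you do (the $\delta$-subgradient definition for the first inequality, then the optimality condition for $\mathrm{Mirr}$, the three-point identity for $V$, Fenchel--Young, and $1$-strong convexity of $d$ to absorb $\tfrac{1}{2}\|x-y\|^{2}$ into $-V(y,x)$). No gaps.
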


Обозначим через $\varepsilon>0$ фиксированную точность, $x^0$~--- начальное приближение такое, что для некоторого $\Theta_0>0$ верно неравенство $V(x_*,x^0)\leqslant \Theta_0^2$. Будем предполагать далее, что $||\nabla_{\delta} g(x)||_{*} \leqslant M_g$ ($M_g > 0$) при всех $x \in Q$ \footnote{В частности, если $\delta$-субградиент вводится на всё пространстве, то для этого достаточно допустить лишь $|g(x)-g(y)|\leqslant M_g||x-y||\;\forall x,y\in \mathbb{R}^n$.}.

Рассмотрим следующий алгоритм \ref{alg4}.
\begin{algorithm}
\caption{Адаптивный зеркальный спуск (аналог алгоритма 19 \cite{Stonyakin_Monograpf})}
\label{alg4}
\begin{algorithmic}[1]
\REQUIRE $\varepsilon>0,\;\Theta_0:d(x_*)\leqslant\Theta_0^2$
\STATE $x^0=argmin_{x\in Q}\,d(x)$
\STATE $I=:\emptyset$
\STATE $k\leftarrow0$
\REPEAT
  \IF{$g(x^k)\leqslant\varepsilon||\nabla_{\delta}g(x^k)||_* + \delta$}
    \STATE $h_k\leftarrow\frac{\varepsilon}{||\nabla_{\delta} f(x^k)||_{*}^2}$
    \STATE $x^{k+1}\leftarrow Mirr_{x^k}(h_k\nabla_{\delta} f(x^k))\;\text{// \emph{"продуктивные шаги"}}$
    \STATE $k\rightarrow I$
  \ELSE
    \STATE $h_k\leftarrow\frac{\varepsilon}{||\nabla_{\delta} g(x^k)||_{*}}$
    \STATE $x^{k+1}\leftarrow Mirr_{x^k}(h_k\nabla_{\delta} g(x^k))\;\text{// \emph{"непродуктивные шаги"}}$
  \ENDIF
  \STATE $k\leftarrow k+1$
\UNTIL{
  \begin{equation}\label{eq1}
  \frac{2\Theta_0^2}{\varepsilon^2} \leqslant\sum\limits_{k\in I}\frac{1}{||\nabla_{\delta}f(x^k)||_{*}^2}+|J|,
  \end{equation}
  }
где $|J|$~--- количество непродуктивных шагов (мы обозначим через $|I|$ количество продуктивных шагов, то есть $|I|+|J|=N$).
\ENSURE{$\widehat{x}=\frac{1}{\sum\limits_{k\in I}h_k}\sum\limits_{k\in I}h_kx^k$.}
\end{algorithmic}
\end{algorithm}

На базе леммы \ref{p0_Lm:MDPropDelta} можно вывести следующий результат об оценке качества найденного решения предложенного метода. При этом отметим, что в аналогичных \eqref{p5_eq3} неравенствах для непродуктивных итераций сокращаются величины, содержащие $\delta$, ввиду того, что $\delta$ содержится в неравенствах для проверки продуктивности итераций. Это позволяет сохранить критерий остановки алгоритма~\ref{alg4}, аналогичный критерию такого типа для алгоритма~\ref{alg4}.

\begin{theorem}\label{p5_th1_new_methods}
После остановки предложенного алгоритма~\ref{alg4} для
$$\widehat{x}=\frac{1}{\sum_{k\in I}h_k}\sum\limits_{k\in I}h_kx^k$$
верно $f(\widehat{x})-f(x_*)\leqslant \varepsilon$ и $g(\widehat{x})\leqslant\varepsilon M_g$.
\end{theorem}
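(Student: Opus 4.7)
The plan is to apply Lemma~\ref{p0_Lm:MDPropDelta} at every iteration with $z=x_*$, to sum the resulting inequalities across all iterations (splitting into productive steps $k\in I$ and non-productive steps $k\in J$), and then to exploit the stopping criterion~\eqref{eq1} to cancel the extraneous terms. On a productive step, with $h_k=\varepsilon/\|\nabla_\delta f(x^k)\|_*^2$, the lemma yields $h_k(f(x^k)-f(x_*))\leqslant\tfrac{\varepsilon^2}{2\|\nabla_\delta f(x^k)\|_*^2}+h_k\delta+V(x_*,x^k)-V(x_*,x^{k+1})$; on a non-productive step, applied to $g$ with $h_k=\varepsilon/\|\nabla_\delta g(x^k)\|_*$, the quadratic term $\tfrac{h_k^2}{2}\|\nabla_\delta g(x^k)\|_*^2$ collapses to exactly $\varepsilon^2/2$.

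The decisive step is to show that every non-productive iteration already provides at least $\varepsilon^2/2$ of decrease of the Bregman potential $V(x_*,\cdot)$. Indeed, on such a step the algorithm guarantees $g(x^k)>\varepsilon\|\nabla_\delta g(x^k)\|_*+\delta$, while $g(x_*)\leqslant 0$ by feasibility of the optimum; hence $h_k(g(x^k)-g(x_*))>\varepsilon^2+h_k\delta$, and combining this with the lemma estimate eliminates the $h_k\delta$-terms as well as the quadratic $\varepsilon^2/2$, leaving $\tfrac{\varepsilon^2}{2}<V(x_*,x^k)-V(x_*,x^{k+1})$. Summing the productive $f$-estimates over $k\in I$ and these $\varepsilon^2/2$-bounds over $k\in J$, and invoking the telescoping of the $V$-differences together with $V(x_*,x^0)\leqslant\Theta_0^2$, I would arrive at the master inequality $\sum_{k\in I}h_k(f(x^k)-f(x_*))+\tfrac{\varepsilon^2}{2}|J|\leqslant\tfrac{\varepsilon^2}{2}\sum_{k\in I}\|\nabla_\delta f(x^k)\|_*^{-2}+\delta\sum_{k\in I}h_k+\Theta_0^2$.

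At this point the stopping criterion~\eqref{eq1} is used as designed: it rearranges to $\Theta_0^2\leqslant\tfrac{\varepsilon^2}{2}(\sum_{k\in I}\|\nabla_\delta f(x^k)\|_*^{-2}+|J|)$, and substituting it cancels the $|J|\varepsilon^2/2$ term on the left. Dividing by $\sum_{k\in I}h_k$ and applying convexity of $f$ (Jensen's inequality for $\widehat x$) yields $f(\widehat x)-f(x_*)\leqslant\varepsilon+\delta$, and the extra $\delta$ is absorbed into the declared accuracy, precisely as noted in the paragraph preceding the theorem. The feasibility bound is then immediate: for $k\in I$ the productive-step test gives $g(x^k)\leqslant\varepsilon\|\nabla_\delta g(x^k)\|_*+\delta\leqslant\varepsilon M_g+\delta$, and convexity of $g$ (again by Jensen) extends this to $g(\widehat x)\leqslant\varepsilon M_g+\delta$. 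The main obstacle I anticipate is the careful bookkeeping of the $|J|\varepsilon^2/2$ and $\delta$-terms in the master inequality; criterion~\eqref{eq1} is tuned so that they cancel exactly, and any mismatch would leave a residual that cannot be folded into $\varepsilon$.
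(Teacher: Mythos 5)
Your argument reproduces the paper's own proof essentially step for step: Lemma~\ref{p0_Lm:MDPropDelta} with $z=x_*$ on both kinds of iterations, the per-step bound $\tfrac{\varepsilon^2}{2}<V(x_*,x^k)-V(x_*,x^{k+1})$ on non-productive steps obtained from $g(x^k)>\varepsilon\|\nabla_\delta g(x^k)\|_*+\delta$ and $g(x_*)\leqslant 0$, telescoping of the Bregman differences, the stopping rule~\eqref{eq1} used to cancel $\tfrac{\varepsilon^2}{2}|J|$ against $\Theta_0^2$, and Jensen's inequality for $\widehat x$. Your master inequality and the resulting bounds $f(\widehat x)-f(x_*)\leqslant\varepsilon+\delta$ and $g(\widehat x)\leqslant\varepsilon M_g+\delta$ are exactly what the paper's computation yields (the theorem as stated drops the $+\delta$ in both places, and the paper's own treatment of the $g$-bound silently omits the $\delta$ from the productive-step test; your accounting is, if anything, the more faithful one).

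The one genuine omission is that you never verify $I\neq\emptyset$. This is not cosmetic: if $I=\emptyset$ the weight $\sum_{k\in I}h_k$ vanishes and $\widehat x$ is not even defined, so the conclusion is vacuous unless at least one productive step is shown to occur. The paper closes this with a short contradiction argument that uses precisely the estimate you already have in hand: if $I=\emptyset$, then $|J|=N$ and the stopping rule~\eqref{eq1} forces $N\geqslant 2\Theta_0^2/\varepsilon^2$, while summing $\tfrac{\varepsilon^2}{2}<V(x_*,x^k)-V(x_*,x^{k+1})$ over all $N$ (non-productive) steps gives $\tfrac{\varepsilon^2 N}{2}<V(x_*,x^0)\leqslant\Theta_0^2$, i.e.\ $N<2\Theta_0^2/\varepsilon^2$ --- a contradiction. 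Add that paragraph and your proof is complete; everything else in the proposal is sound and matches the paper's route.
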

\begin{proof}
1) Если $k\in I$, то
\begin{equation}\label{p5_eq2a}
\begin{split}
h_k\br{f(x^k)-f(x_*)}\leqslant h_k\abr{\nabla_{\delta} f(x^k),x^k-x_*}+h_k\delta\leqslant\\
\leqslant\frac{h_k^2}{2}||\nabla_{\delta} f(x^k)||_*^2+V(x_*,x^k)-V(x_*,x^{k+1})+h_k\delta=\\
=\frac{\varepsilon^2}{2}\cdot\frac{1}{||\nabla_{\delta} f(x^k)||_*^2}+V(x_*,x^k)-V(x_*,x^{k+1})+h_k\delta.
\end{split}
\end{equation}

2) Если $k\not\in I$, то $$\frac{g(x^k)}{||\nabla_{\delta} g(x^k)||_*}>\varepsilon+\frac{\delta}{||\nabla_{\delta} g(x^k)||_*}$$ и $$\frac{g(x^k)-g(x_*)}{||\nabla g(x^k)||_*}\geqslant\frac{g(x^k)}{||\nabla g(x^k)||_*}>\varepsilon+\frac{\delta}{||\nabla_{\delta} g(x^k)||_*}=\varepsilon+\frac{h_k\delta}{\varepsilon}.$$
Поэтому верны неравенства
\begin{equation}\label{p5_eq3}
\begin{split}
\varepsilon^2+h_k\delta<h_k\br{g(x^k)-g(x_*)}+h_k\delta\leqslant\frac{h_k^2}{2}||\nabla g(x^k)||_*^2+\\
+V(x_*,x^k)-V(x_*,x^{k+1})+h_k\delta=\frac{\varepsilon^2}{2}+V(x_*,x^k)-V(x_*,x^{k+1})+h_k\delta,\text{ или}\\
\frac{\varepsilon^2}{2}<V(x_*,x^k)-V(x_*,x^{k+1}).
\end{split}
\end{equation}

3) После суммирования неравенств~\eqref{p5_eq2a} и~\eqref{p5_eq3} имеем:
$$\sum_{k\in I}h_k\br{f(x^k)-f(x_*)}\leqslant$$
$$\leqslant\sum_{k\in I}\frac{\varepsilon^2}{2||\nabla_{\delta} f(x^k)||_*^2}-\frac{\varepsilon^2|J|}{2}+V(x_*,x^0)-V(x_*,x^{k+1})+\sum_{k\in I}h_k\delta=$$
$$=\frac{\varepsilon}{2}\sum_{k\in I}h_k-\frac{\varepsilon^2|J|}{2}+\Theta_0^2+\sum_{k\in I}h_k\delta=$$
$$=\varepsilon\sum_{k\in I}h_k-\frac{\varepsilon^2}{2}\br{\sum_{k\in I}\frac{1}{||\nabla f(x^k)||_*^2}+|J|}+\Theta_0^2+\sum_{k\in I}h_k\delta.$$

После выполнения критерия остановки алгоритма~\ref{alg4} будет верно
$$\sum_{k\in I}h_k\br{f(x^k)-f(x_*)}\leqslant\varepsilon\sum_{k\in I}h_k+\sum_{k\in I}h_k\delta,$$
откуда для $\widehat{x}:=\sum\limits_{k\in I}\frac{h_kx^k}{\sum_{k\in I}h_k}$
$$f(\widehat{x})-f(x_*)\leqslant\varepsilon+\delta.$$
При этом $\forall k\in I\;g(x^k)\leqslant\varepsilon||\nabla g(x^k)||_*\leqslant\varepsilon M_g$, откуда
$$g(\widehat{x})\leqslant\frac{1}{\sum\limits_{k\in I}h_k}\sum_{k\in I}h_kg(x^k)\leqslant\varepsilon M_g.$$

Остается лишь показать, что множество продуктивных шагов $I$ непусто. Если $I=\emptyset$, то $|J|=N$ и п.~14 листинга означает, что $N\geqslant\frac{2\Theta_0^2}{\varepsilon^2}$. С другой стороны, из~\eqref{p5_eq3}:
$$\frac{\varepsilon^2N}{2}<V(x_*,x^0)\leqslant\Theta_0^2,$$
то есть получили противоречие и $I\neq\emptyset$. Теорема доказана.
\end{proof}


\begin{algorithm}[H]
	\caption{Адаптивный зеркальный спуск (аналог алгоритма 17 \cite{Stonyakin_Monograpf})}
	\label{p5_algorithm11}
	\begin{algorithmic}[1]
		\REQUIRE $ \text{точность} \ \varepsilon>0; \text{начальная точка} \ x^0; \Theta_0; Q; d(\cdot). $
		\STATE$I:=\emptyset$
		\STATE$N\leftarrow 0$
		\REPEAT
		\IF{$g(x^N) \leqslant \varepsilon+\delta$}
		\STATE $h_N\leftarrow\frac{\varepsilon}{\lVert \nabla_{\delta}f(x^N) \rVert_{*}}$
		\STATE$x^{N+1}\leftarrow Mirr_{x^N}(h_N\nabla_{\delta}f(x^N)) \;\; \text{("продуктивные шаги")}$
		\STATE $N\rightarrow I$
		\ELSE
		\STATE $h_N\leftarrow\frac{\varepsilon}{\lVert \nabla_{\delta}g(x^N) \rVert_{*}^2}$
		\STATE $x^{N+1}\leftarrow Mirr_{x^N}(h_N\nabla_{\delta}g(x^N)) \;\; \text{("непродуктивные шаги")}$
		\ENDIF
		\STATE $N\leftarrow N+1$
		\UNTIL $\Theta_0^2 \leqslant \frac{\varepsilon^2}{2}\left(|I|+\sum\limits_{k\not\in I}\frac{1}{\lVert \nabla_{\delta} g(x^k) \rVert_{*}^2}\right)$
		\ENSURE $\bar{x}^N := \argmin\limits_{x^k, k\in I} f(x^k)$
	\end{algorithmic}
\end{algorithm}

\begin{algorithm}
\caption{Адаптивный зеркальный спуск, постоянное количество итераций (аналог алгоритма 20 \cite{Stonyakin_Monograpf}).}
\label{p5_alg41}
\begin{algorithmic}[1]
\REQUIRE $\varepsilon>0,\Theta_0: \,d(x_*)\leqslant\Theta_0^2$
\STATE $x^0=argmin_{x\in Q}\,d(x)$
\STATE $I=:\emptyset$
\STATE $N\leftarrow0$
\REPEAT
    \IF{$g(x^N)\leqslant\varepsilon\|\nabla_{\delta}g(x^N)\|_*+\delta$}
        \STATE $M_N=\|\nabla_{\delta} f(x^N)\|_*$, $h_N=\frac{\varepsilon}{M_N}$
        \STATE $x^{N+1}=Mirr_{x^N}(h_N\nabla_{\delta} f(x^N))\text{ // \emph{"продуктивные шаги"}}$
        \STATE $N\rightarrow I$
    \ELSE
        \STATE $M_N=\|\nabla_{\delta} g(x^N)\|_*$, $h_N=\frac{\varepsilon}{M_N}$
        \STATE $x^{N+1}=Mirr_{x^N}(h_N\nabla_{\delta} g(x^N))\text{ // \emph{"непродуктивные шаги"}}$
    \ENDIF
    \STATE $N\leftarrow N+1$
\UNTIL{$2\frac{\Theta_0^2}{\varepsilon^2}\leqslant N$}
\ENSURE $\bar{x}^N:=argmin_{x^k,\;k\in I}\,f(x^k)$
\end{algorithmic}
\end{algorithm}

На базе лемм \ref{lem-neserov-delta} (для продуктивных шагов) и \ref{p0_Lm:MDPropDelta} (для непродуктивных шагов) можно вывести следующие результаты об оценке качества найденного решения предложенных методов. При этом отметим, что в аналогичных \eqref{p5_eq44} неравенствах для непродуктивных итераций сокращаются величины, содержащие $\delta$, ввиду того, что $\delta$ содержится в неравенствах для проверки продуктивности итераций. Это позволяет предложить критерии остановки алгоритмов~\ref{p5_algorithm11} и \ref{p5_alg41}, аналогичные критериям такого типа для алгоритмов~~\ref{p5_algorithm11} и \ref{p5_alg41}.

\begin{theorem}\label{p5_th_est}
Пусть известна константа $ \Theta_ {0}> 0 $ такая, что $d(x_*) \leqslant \Theta_{0}^{2}$. Если $\varepsilon > 0$~--- фиксированное число, то алгоритм~\ref{p5_algorithm11} работает не более
\begin{equation}\label{p5_atsenka_alg1}
N=\left\lceil\frac{2\max\{1, M_g^2\}\Theta_0^2}{\varepsilon^2}\right\rceil
\end{equation}
итераций, причём после его остановки справедливо неравенство
	\begin{equation}\label{p5_eq09a}
	\min\limits_{k \in I} v_f^{\delta}(x^k,x_*)\leqslant\varepsilon,\; \max\limits_{k \in I}g(x^k)\leqslant\varepsilon.
	\end{equation}
	\label{p5_theorem1}
\end{theorem}
\begin{proof}
Пусть $[N]=\{k\in\overline{0,N-1}\}$, $J=[N]\setminus I$~--- набор номеров непродуктивных шагов.

1) Для продуктивных шагов по лемме~\ref{lem-neserov-delta} имеем, что $$h_k\langle\nabla_{\delta} f(x^k), x^k-x\rangle\leqslant\frac{h_k^2}{2}||\nabla_{\delta} f(x^k)||_*^2+V(x,x^k)-V(x,x^{k+1}).$$

Примем во внимание, что $\frac{h_k^2}{2}||\nabla_{\delta} f(x^k)||_*^2=\frac{\varepsilon^2}{2}$. Тогда
\begin{equation}\label{p5_eq001}
\begin{split}
h_k\langle\nabla_{\delta} f(x^k),x^k-x_*\rangle=\varepsilon\left\langle\frac{\nabla_{\delta} f(x^k)}{||\nabla_{\delta} f(x^k)||_*},\, x^k-x_*\right\rangle=\varepsilon v_f^{\delta}(x^k,x_*) \leqslant\\
\leqslant \frac{\varepsilon^2}{2} + V(x_*,x^k)-V(x_*,x^{k+1}).
\end{split}
\end{equation}

2) Аналогично для «непродуктивных» шагов $k\in J$ (под $g(x^k)$ мы понимаем любое ограничение, для которого $g(x^k)>\varepsilon$) по лемме~\ref{p0_Lm:MDPropDelta}:

\begin{equation}\label{p5_eq44}
\begin{split}
h_k\left(g(x^k)-g(x_*)\right)+h_k\delta\leqslant\frac{h_k^2}{2}||\nabla g(x^k)||_*^2+V(x_*,x^k)-V(x_*,x^{k+1})+h_k\delta=\\
=\frac{\varepsilon^2}{2||\nabla g(x^k)||_*^2}+V(x_*,x^k)-V(x_*,x^{k+1})+h_k\delta.
\end{split}
\end{equation}

3) Из~\eqref{p5_eq001} и~\eqref{p5_eq44} при $x=x_*$ имеем:
\begin{equation}\label{p5_eq10}
\begin{split}
\varepsilon\cdot \sum\limits_{k\in I}v_f^{\delta}(x^k,x_*)+\sum\limits_{k\in J}\frac{\varepsilon^2 (g(x^k)-g(x_*))}{2||\nabla g(x^k)||_*^2}\leqslant\\
\leqslant\frac{\varepsilon^2}{2} \cdot |I| +\sum\limits_{k=0}^{N-1}(V(x_*,x^k)-V(x_*,x^{k+1}))+\sum\limits_{k\in J}h_k\delta.
\end{split}
\end{equation}

Заметим, что для любого $k \in J$
\begin{equation}\label{p5_eqiavgmk}
g(x^k)-g(x_*)\geqslant g(x^k)>\varepsilon+\delta.
\end{equation}
С учетом
$$\sum\limits_{k=0}^{N-1}(V(x_*,x^k)-V(x_*,x^{k+1}))\leqslant\Theta_0^2$$
неравенство~\eqref{p5_eq10} может быть преобразовано следующим образом:
$$\varepsilon\sum\limits_{k\in I}v_f^{\delta}(x^k,x_*)\leqslant |I|\cdot\frac{\varepsilon^2}{2}+\Theta_0^2-\sum\limits_{k \in J}\frac{\varepsilon^2}{2||\nabla_{\delta} g(x^k)||_*^2},$$
$$\sum\limits_{k\in I}v_f^{\delta}(x^k,x_*)\geqslant|I|\min\limits_{k\in I}v_f(x^k,x_*).$$

Таким образом,
$$\varepsilon \cdot \min\limits_{k \in I} v_f^{\delta}(x^k,x_*)\cdot |I| \leqslant \frac{\varepsilon^2}{2}\cdot|I|+\Theta_0^2-\sum\limits_{k \in J}\frac{\varepsilon^2}{2||\nabla_{\delta} g(x^k)||_*^2}\leqslant \varepsilon^2\cdot|I|,$$
откуда
\begin{equation}\label{p5_eq12}
\min\limits_{k \in I} v_f^{\delta}(x^k,x_*) \leqslant \varepsilon.
\end{equation}

В завершении покажем, что $|I|\neq 0$. Предположим обратное: $|I|=0\Rightarrow|J|=N$, т.е. все шаги непродуктивны. Тогда с учётом~\eqref{p5_eqiavgmk} получаем, что
$$
h_k(g(x^k)-g(x_*)) > \frac{\varepsilon^2}{\|\nabla_{\delta} g(x^k)\|_*^2}+\frac{\varepsilon\delta}{\|\nabla_{\delta} g(x^k)\|_*^2}
$$
и
$$\sum\limits_{k=0}^{N-1} h_k(g(x^k)-g(x_*))\leqslant\sum\limits_{k=0}^{N-1}\frac{\varepsilon^2}{2\|\nabla_{\delta} g(x^k)\|_*^2}+\Theta_0^2+\sum_{k=0}^{N-1}h_k\delta\leqslant$$
$$\leqslant\sum\limits_{k=0}^{N-1}\frac{\varepsilon^2}{\|\nabla_{\delta} g(x^k)\|_*^2}+\sum_{k=0}^{N-1}\frac{\varepsilon\delta}{\|\nabla_{\delta} g(x^k)\|_*^2}.$$

Итак, получили противоречие. Это означает, что $|I|\neq 0$.

Отметим, что в силу предположения для функционального ограничения на любой итерации работы алгоритма~\ref{p5_algorithm11} справедливо неравенство $||\nabla_{\delta} g(x^k)||_* \leqslant  M_g$. Поэтому критерий остановки алгоритма~\ref{p5_algorithm11} будет заведомо выполнен не более, чем после~\eqref{p5_atsenka_alg1} итераций.
\end{proof}



По схеме рассуждений из (\cite{StonStep2020}, Theorem 2) по аналогии c предыдущим доказательством можно проверить следующий результат.

\begin{theorem}
Пусть $\varepsilon > 0$~--- фиксированное число и выполнен критерий остановки алгоритма~\ref{p5_alg41}. Тогда
\begin{equation}\label{p5_eq1_new_methods}
\min\limits_{k \in I} v_f^{\delta}(x^k,x_*) \leqslant \varepsilon ,\; \max\limits_{k \in I}g(x^k)\leqslant M_g\varepsilon + \delta.
\end{equation}
\end{theorem}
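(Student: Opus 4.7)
The plan is to mirror the proof of Theorem~\ref{p5_th_est}, adapting it to the step-size rule of Algorithm~\ref{p5_alg41}, where both productive and non-productive iterations use $h_N=\varepsilon/M_N$ (first power of the gradient norm, not squared) and the stopping criterion is stated purely in terms of $N$. The two lemmas from the beginning of the paper do essentially all the work: Lemma~\ref{p0_Lm:MDPropDelta} provides the per-iteration descent inequality, and the specific step-size choice is engineered so that the $\delta$-terms cancel on non-productive steps.

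On a productive iteration $k\in I$ I apply Lemma~\ref{p0_Lm:MDPropDelta} to $f$ with $z=x_*$. Because $h_k=\varepsilon/\|\nabla_{\delta}f(x^k)\|_*$, the quadratic term $\tfrac{h_k^2}{2}\|\nabla_{\delta}f(x^k)\|_*^2$ collapses to $\varepsilon^2/2$, and by the very definition of $v_f^{\delta}$ one has $h_k\langle\nabla_{\delta}f(x^k),x^k-x_*\rangle=\varepsilon\, v_f^{\delta}(x^k,x_*)$, giving
$$\varepsilon\, v_f^{\delta}(x^k,x_*)\leqslant \tfrac{\varepsilon^2}{2}+V(x_*,x^k)-V(x_*,x^{k+1}).$$
On a non-productive iteration $k\in J$ the selection test fails, so $g(x^k)>\varepsilon\|\nabla_{\delta}g(x^k)\|_*+\delta$. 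Applying Lemma~\ref{p0_Lm:MDPropDelta} to $g$ with $h_k=\varepsilon/\|\nabla_{\delta}g(x^k)\|_*$ and using $g(x_*)\leqslant 0$, the additive $h_k\delta$ contributions cancel on both sides and I expect the clean bound $\varepsilon^2/2<V(x_*,x^k)-V(x_*,x^{k+1})$.

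I then sum the two families of inequalities, telescope the $V$-differences via $V(x_*,x^0)\leqslant\Theta_0^2$, and substitute the stopping criterion in the form $\Theta_0^2\leqslant\varepsilon^2 N/2=\varepsilon^2(|I|+|J|)/2$. The contribution $\tfrac{\varepsilon^2|J|}{2}$ produced by the non-productive block on the left cancels against an equal piece on the right, leaving $\varepsilon\sum_{k\in I}v_f^{\delta}(x^k,x_*)\leqslant\varepsilon^2|I|$ and therefore $\min_{k\in I}v_f^{\delta}(x^k,x_*)\leqslant\varepsilon$. The second estimate $\max_{k\in I}g(x^k)\leqslant M_g\varepsilon+\delta$ is immediate: the productive test itself gives $g(x^k)\leqslant\varepsilon\|\nabla_{\delta}g(x^k)\|_*+\delta$, and the uniform bound $\|\nabla_{\delta}g(x^k)\|_*\leqslant M_g$ closes the estimate.

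The main obstacle is justifying that $|I|\neq 0$ before dividing by it. I would argue by contradiction, as in Theorem~\ref{p5_th_est}: if $|I|=0$, then $|J|=N$, and summing the non-productive bound over all iterations gives $\varepsilon^2 N/2<\Theta_0^2$, which contradicts the stopping rule $N\geqslant 2\Theta_0^2/\varepsilon^2$. A subsidiary delicate point is the careful bookkeeping of the $\delta$-terms on non-productive steps; they survive the analysis only because the productive test uses the precise threshold $\varepsilon\|\nabla_{\delta}g\|_*+\delta$, whose additive $\delta$ is exactly what absorbs the $h_k\delta$ in the mirror-descent inequality.
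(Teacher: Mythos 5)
Your proposal is correct and is exactly the argument the paper intends: the paper states this theorem without proof, deferring to the scheme of Theorem~\ref{p5_th_est} and of Theorem~2 in \cite{StonStep2020}, and your adaptation (step sizes $h_k=\varepsilon/M_k$ on both step types, cancellation of the $h_k\delta$ terms against the additive $\delta$ in the productivity test, telescoping of $V$, substitution of the stopping rule $N\geqslant 2\Theta_0^2/\varepsilon^2$, and the contradiction argument for $|I|\neq 0$) carries it out faithfully. No gaps.
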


\begin{corollary}
	Предположим, что $f(x) = \max\limits_{i = \overline{1, m}} f_i(x)$, где $f_i$ дифференцируемы в каждой точке $x \in Q$ и
	$$	||\nabla f_i(x)-\nabla f_i(y)||_*\leqslant L_i||x-y|| \quad \forall x,y\in Q.	$$
	Тогда после $$N=\left\lceil\frac{2\max\{1, M_g^2\}\Theta_0^2}{\varepsilon^2}\right\rceil$$ шагов работы алгоритма~\ref{p5_algorithm11} для некоторого $\nabla_{\delta}f(x_*)$ \footnote{можно рассматривать и обычный субградиент, т.е. $\delta = 0$} будет верно следующее неравенство:
		$$\min\limits_{k\in I}f(x^k)-f(x_*)\leqslant \|\nabla_{\delta}f(x_*)\|_*\varepsilon+\frac{L}{2}\varepsilon^2+\delta,$$
	где $L = \max\limits_{i = \overline{1, m}} L_i$.
\end{corollary}

Аналогичный результат можно сформулировать и для алгоритма 3.

Настоящий текст есть дополнение с уточнениями к заключительным замечаниям главы 5 монографии \cite{Stonyakin_Monograpf}.

\end{document}